\newdimen\plusheight
\def\+{\;\lower\plusheight\hbox{$+$}\;}
\newdimen\minusheight
\def\-{\;\lower\minusheight\hbox{$-$}\;}
\newdimen\cdotsheight
\def\cds{\lower\cdotsheight\hbox{$\cdots$}}
\def\leqalignno#1{\displ@y \tabskip\z@ plus\@ne fil
  \halign to\displaywidth{\hfil$\@lign\displaystyle{##}$\tabskip\z@skip
    &$\@lign\displaystyle{{}##}$\hfil\tabskip\z@ plus\@ne fil
    &\kern-\displaywidth\rlap{$\@lign\hbox{\rm##}$}\tabskip\displaywidth\crcr
    #1\crcr}}
\renewcommand{\i}{\infty}
\newcommand{\eb}{\begin{equation}}
\newcommand{\ee}{\end{equation}}
\newcommand{\df}{\dfrac}
\newcommand{\csch}{\operatorname{csch}}
\renewcommand{\Re}{\operatorname{Re}}
\newcommand{\dn}{\operatorname{dn}}
\newcommand{\sn}{\operatorname{sn}}
\newcommand{\cn}{\operatorname{cn}}
\renewcommand{\Re}{\text{Re}}
\renewcommand{\(}{\left\(}
\renewcommand{\)}{\right\)}
\renewcommand{\[}{\left\[}
\renewcommand{\]}{\right\]}
\renewcommand{\i}{\infty}
\numberwithin{equation}{section}
 \theoremstyle{plain}
\newtheorem{theorem}{Theorem}[section]
\begin{document}

\title[Two-dimensional series evaluations via Ramanujan and Jacobi]
{Two-dimensional series evaluations via the elliptic functions of
Ramanujan and Jacobi}

\author{Bruce C.~Berndt$^1$}
\thanks{1 Research supported by National Security Agency grant
H98230-11-1-0200}
\address{Department of Mathematics, University of Illinois, 1409
West Green St., Urbana, IL 61801, USA}

 \author{George Lamb}
 \address{2942 Ave.~del Conquistador, Tucson, AZ 85749-9304, USA}

 \author{Mathew Rogers$^2$}
 \thanks{2 Research supported by National Science Foundation
grant DMS-0803107.}
\address{Department of Mathematics, University of Illinois, 1409
West Green St., Urbana, IL 61801, USA}

\maketitle

\noindent{\footnotesize{\bf Abstract}. We evaluate in closed form,
for the first time,  certain classes of double series, which are
remindful of lattice sums.  Elliptic functions, singular moduli,
class invariants, and the Rogers--Ramanujan continued fraction
play central roles in our evaluations.}

\section{Introduction}

In this paper we establish elementary evaluations of certain
$2$-dimensional infinite series. For example,
\begin{equation}\label{Lamb's formula}
\begin{split}
\sum_{n=-\infty}^{\infty}\sum_{m=-\infty}^{\infty}\frac{(-1)^{m+n}}{(5m)^2+(5n+1)^2}
=&-\frac{\pi}{5\sqrt{5}}\log\left(\sqrt{5} + 1 - \sqrt{5 + 2\sqrt{5}}\right)\\
   &+\frac{\pi}{25}\log\left(11 + 5\sqrt{5}\right),
\end{split}
\end{equation}
which is a problem submitted to the \emph{American Mathematical Monthly}
\cite{zm}.
The algebraic numbers on the right-hand side of \eqref{Lamb's
formula} arise from special values of the Rogers--Ramanujan
continued fraction.  In general, elementary evaluations are quite
rare for higher-dimensional lattice-type sums. For instance, the
third author has examined both double and quadruple sums in
connection with Mahler measures of elliptic curves; those sums
typically reduce to values of hypergeometric functions
\cite{RGlattice}, \cite{RZ}, \cite{RZ2}.  The most famous
higher-dimensional sum is the Madelung constant from
crystallography  \cite{Bor}, \cite{Cr}, \cite{FG}, \cite{Zu}. It
is highly unlikely that Madelung's constant possesses an
evaluation in closed form.

We produce many additional results along the lines of
\eqref{Lamb's formula}.  In fact, we show that it is possible to
evaluate
\begin{equation}\label{Fabx definition}
F_{(a,b)}(x):=\sum_{n=-\infty}^{\infty}\sum_{m=-\infty}^{\infty}
\frac{(-1)^{m+n}}{(xm)^2+(a n+b)^2}
\end{equation}
for any positive rational value of $x$, and for many values of
$(a,b)\in\mathbb{N}^2$. Since the series is not absolutely
convergent, we will calculate the $n$-index of summation using
$\sum_{n}=\lim_{N\rightarrow\infty}\sum_{-N<n<N}$. When $a\le 6$,
results from Ramanujan's notebooks apply, and the values of the sums
can be deduced from classical results in theta functions and
$q$-series. When $a>6$, the situation is slightly more complicated.
In those cases we require the added hypothesis that
$a\in2\mathbb{Z}$, and then we use properties of Jacobian elliptic
functions.

\section{The main theorem}
Before proving our main theorem, we note that it is possible to
calculate $F_{(a,b)}(x)$ to high numerical precision with the
formula
\begin{equation}\label{Fabx rapidly converging formula}
F_{(a,b)}(x)=\frac{\pi}{x}\sum_{n=-\infty}^{\infty}\frac{(-1)^n
\csch\left(\frac{\pi(a n+b)}{x}\right)}{a n+b},
\end{equation}
which can be established by substituting the partial fractions
decomposition for $\csch(z)$ \cite[p.~28, Entry 1.217]{gr} in
\eqref{Fabx definition}. Formula \eqref{Fabx rapidly converging
formula} provides an easy way to numerically verify results such
as \eqref{Lamb's formula} and \eqref{F511 formula}. For instance,
we calculated $F_{(5,1)}(1)$ to more than $100$ decimal places
 by summing over $-15\le n\le 15$.

\begin{theorem}\label{MT} Suppose that $a$ and $b$ are integers with $a\ge 2$, $(a,b)=1$, and
assume that $\textup{Re}(x)>0$.  Then
\begin{equation}\label{Fabx main formula}
F_{(a,b)}(x)=-\frac{2\pi}{a
x}\sum_{j=0}^{a-1}\omega^{-(2j+1)
b}\log\left(\prod_{m=0}^{\infty}\left(1-\omega^{2j+1}
q^{2m+1}\right)\left(1-\omega^{-2j-1}q^{2m+1}\right)\right),
\end{equation}
where $\omega=e^{\pi i/a}$ and $q=e^{-\pi /x}$.
\end{theorem}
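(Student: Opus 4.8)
The plan is to derive \eqref{Fabx main formula} from the rapidly converging formula \eqref{Fabx rapidly converging formula} by expanding every quantity into a $q$-series and matching. Since $\operatorname{Re}(x)>0$ forces $\operatorname{Re}(1/x)>0$, we have $|q|=e^{-\pi\operatorname{Re}(1/x)}<1$, so the infinite products in \eqref{Fabx main formula} converge and all the $q$-series appearing below are absolutely convergent; hence every interchange of summation and every rearrangement in the argument is legitimate. (One could instead prove the identity for real $x>0$ and invoke analyticity of both sides on the right half-plane.)

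First I would expand the hyperbolic cosecant in \eqref{Fabx rapidly converging formula}. For $\operatorname{Re}(w)>0$ one has $\csch w=2\sum_{m\ge0}e^{-(2m+1)w}$, and $\csch$ is odd; since $(a,b)=1$ and $a\ge2$ we have $an+b\ne0$ for every $n$, and $\operatorname{Re}\bigl(\pi(an+b)/x\bigr)$ has the sign of $an+b$. Thus the factor $\operatorname{sgn}(an+b)$ produced by expanding $\csch$ cancels the same factor in $1/(an+b)$, giving
\begin{equation*}
F_{(a,b)}(x)=\frac{2\pi}{x}\sum_{m=0}^{\infty}\sum_{n=-\infty}^{\infty}\frac{(-1)^{n}\,q^{(2m+1)|an+b|}}{|an+b|}.
\end{equation*}
On the other side, using $-\log(1-z)=\sum_{\ell\ge1}z^{\ell}/\ell$ together with $\omega^{(2j+1)\ell}+\omega^{-(2j+1)\ell}=2\cos\bigl((2j+1)\ell\pi/a\bigr)$, the logarithm in \eqref{Fabx main formula} becomes $-\sum_{m\ge0}\sum_{\ell\ge1}\ell^{-1}\bigl(\omega^{(2j+1)\ell}+\omega^{-(2j+1)\ell}\bigr)q^{(2m+1)\ell}$, and so the right-hand side of \eqref{Fabx main formula} equals
\begin{equation*}
\frac{2\pi}{ax}\sum_{m=0}^{\infty}\sum_{\ell=1}^{\infty}\frac{q^{(2m+1)\ell}}{\ell}\sum_{j=0}^{a-1}\left(\omega^{(2j+1)(\ell-b)}+\omega^{-(2j+1)(\ell+b)}\right).
\end{equation*}

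The crux is the inner sum over $j$. Since $\omega^{2}=e^{2\pi i/a}$ is a primitive $a$-th root of unity and $\omega^{at}=(-1)^{t}$, one has the elementary identity $\sum_{j=0}^{a-1}\omega^{(2j+1)k}=a(-1)^{k/a}$ when $a\mid k$ and $0$ otherwise. Applying this, only the indices $\ell$ with $\ell\equiv b$ or $\ell\equiv-b\pmod a$ survive. Writing $\ell=an+b$ in the first case and $\ell=-(an+b)$ (after the substitution $n\mapsto-n$) in the second, and checking in each case that the surviving sign $(-1)^{k/a}$ coincides with $(-1)^{n}$, the two residue classes reassemble into a single sum over all $n\in\mathbb{Z}$, and the right-hand side of \eqref{Fabx main formula} collapses to exactly the double sum displayed above for $F_{(a,b)}(x)$. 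The whole argument is essentially bookkeeping; the one place needing genuine care is the sign-and-absolute-value accounting — verifying that the $\operatorname{sgn}(an+b)$ factors cancel cleanly when $\csch$ is expanded, and that the reindexing of the two classes $\pm b\pmod a$ restores exactly the powers $q^{(2m+1)|an+b|}$ and signs $(-1)^{n}$ needed for the two pieces to glue together.
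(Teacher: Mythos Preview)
Your argument is correct. Both your proof and the paper's land on the same intermediate identity
\[
F_{(a,b)}(x)=\frac{2\pi}{x}\sum_{m\ge0}\sum_{n\in\mathbb{Z}}\frac{(-1)^{n}q^{(2m+1)|an+b|}}{|an+b|},
\]
and from there both use the same roots-of-unity orthogonality to reach the product formula; your computation of $\sum_{j=0}^{a-1}\omega^{(2j+1)k}$ and the reindexing into the two residue classes $\ell\equiv\pm b\pmod a$ is exactly the content of the paper's identity \eqref{main thm log sum evaluation}. The genuine difference is in how the intermediate identity is obtained. The paper works directly from the defining double sum, writes each term as a Gaussian integral, applies the modular transformation of $\varphi(-q)$ to the $m$-sum, and evaluates the resulting integral via the $K$-Bessel formula, all the while keeping the $n$-sum truncated to $(-N,N)$ to control the conditional convergence. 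You instead take formula \eqref{Fabx rapidly converging formula} as input and expand $\csch$ geometrically. Your route is more elementary---no theta transformation or Bessel integral---but it leans on \eqref{Fabx rapidly converging formula}, which the paper only states (attributing it to the partial-fractions expansion of $\csch$); once that formula is granted, the $n$-sum is absolutely convergent and the truncation bookkeeping disappears. The paper's route is self-contained from the definition and makes the handling of conditional convergence explicit.
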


\begin{proof} Suppose that $N$ is a positive integer.  Then using
the transformation formula for the theta function
$\varphi(-q)=\sum_{n=-\i}^{\i}(-1)^nq^{n^2}$ \cite[p.~43, Entry
27(ii)]{III} and inverting the order of summation and integration
twice by absolute convergence, we find that
\begin{align*}
\sum_{\substack{m\in\mathbb{Z}\\-N< n < N}}\frac{(-1)^{m+n}}{(x
m)^2+(a n+b)^2}=&\pi\sum_{-N<n< N}(-1)^n\int_{0}^{\infty}e^{-\pi
(a n+b)^2
u}\left(\sum_{m\in\mathbb{Z}}(-1)^m e^{-\pi m^2 x^2 u}\right)  du\\
%\sum_{\substack{m\in\mathbb{Z}\\-N<n<N}}\frac{(-1)^{n+m}}{(x
%m)^2+(a n+b)^2}
=&\pi\sum_{-N< n< N}(-1)^n\int_{0}^{\infty}e^{-\pi (a n+b)^2
u}\left(\frac{2}{x\sqrt{u}}\sum_{m=0}^{\infty}e^{-\frac{\pi (2m+1)^2}{4 x^2 u}} \right) du\\
=&\frac{2\pi}{x}\sum_{m=0}^{\infty}\sum_{-N< n<
N}(-1)^n\int_{0}^{\infty}e^{-\pi (a n+b)^2 u}e^{-\frac{\pi
(2m+1)^2}{4 x^2 u}} \frac{du}{\sqrt{u}}.
\end{align*}
The substitution of a standard $K$-Bessel function integral
\cite[p.~384, formula 3.471, no.~9]{gr}
\begin{equation*}
\int_{0}^{\infty}e^{-\pi\left(A^2 u+B^2/u\right)}\frac{d
u}{\sqrt{u}}=\frac{e^{-2 \pi |A| |B|}}{|A|}
\end{equation*}
leads to
\begin{equation}\label{main thm intermed}
\sum_{\substack{m\in\mathbb{Z}\\-N< n< N}}\frac{(-1)^{m+n}}{(x
m)^2+(a n+b)^2}=\frac{2\pi}{x}\sum_{m=0}^{\infty}\sum_{-N< n<
N}\frac{(-1)^n q^{(2m+1)|a n+b|}}{|a n+b|}.
\end{equation}
Notice that the condition $\Re(x)>0$  ensures convergence. Now
recall that $\omega=e^{\pi i/a}$. If $a\ge 2$, $(a,b)=1$, and
$|r|<1$, then
\begin{equation}\label{main thm log sum evaluation}
\sum_{n=-\infty}^{\infty}\frac{(-1)^n r^{|a n+b|}}{|a
n+b|}=-\frac{1}{a}\sum_{j=0}^{a-1}\omega^{-(2j+1)
b}\log\left(\left(1-\omega^{2j+1} r\right)\left(1-\omega^{-(2j+1)}
r\right)\right),
\end{equation}
which is easily verified by comparing Taylor series coefficients
in $r$. Notice that \eqref{main thm log sum evaluation} involves
an infinite series in $n$, whereas \eqref{main thm intermed}
imposes the restriction that $n\in(-N,N)$. If we divide the
left-hand side of \eqref{main thm log sum evaluation} into three
components, $-\infty<n\leq-N, -N<n<N, N\leq{n}<\infty$, and use a crude
error estimate to bound the terms where $n\ge N$ and $n\le -N$,
then we find that
\begin{equation}\label{main thm last step}
\begin{split}
\sum_{-N<n<N}\frac{(-1)^n r^{|a n+b|}}{|a
n+b|}=&-\frac{1}{a}\sum_{j=0}^{a-1}\omega^{-(2j+1)
b}\log\left(\left(1-\omega^{2j+1} r\right)\left(1-\omega^{-(2j+1)}
r\right)\right)\\
&+O\left(\frac{r^N}{(1-r)N}\right).
\end{split}
\end{equation}
Substituting \eqref{main thm last step} into \eqref{main thm
intermed} leads to equation \eqref{Fabx main formula} plus an
error term. The error term can easily be seen to approach zero as
$N\rightarrow \infty$.
\end{proof}

\section{Simplification for $a\in\{3,4,5,6\}$}

Although \eqref{Fabx main formula} was not difficult to prove, the
deduction of results such as \eqref{Lamb's formula} from
\eqref{Fabx main formula} is usually more difficult. In this
section we
 examine the cases where $a\in \{3,4,5,6\}$. In these instances
we can assume that $b=1$ without loss of generality, because a
standard symmetry (e.g. $n\rightarrow -n$) can be used to recover
the other possible values of $F_{(a,b)}(x)$. The same symmetry
immediately implies that $F_{(2,1)}(x)=0$.

Let us briefly recall  the $q$-series notation
$$  (a;q)_{\infty} := \prod_{j=0}^{\infty}(1-aq^j), \qquad |q| < 1. $$
Following Ramanujan's  notation for theta functions, define
\begin{align*}
\varphi(q)&=\sum_{n=-\infty}^{\infty}q^{n^2},&\psi(q)&=\sum_{n=0}^{\infty}q^{\frac{n(n+1)}{2}},\\
\chi(q)&=(-q;q^2)_{\infty},&f(-q)&=(q;q)_{\infty}.
\end{align*}
We need the famous Jacobi triple product identity \cite[p.~35,
Entry 19]{III} for Ramanujan's general theta function $f(a,b)$,
and they are given by
\begin{equation}\label{fabjtp}
f(a,b) := \sum_{n=-\infty}^{\infty}a^{n(n+1)/2}b^{n(n-1)/2}
=(-a;ab)_{\infty}(-b;ab)_{\infty}(ab;ab)_{\infty}, \qquad |ab|<1.
\end{equation}
We note the easily proved evaluation \cite[p.~34, Entry
18(ii)]{III}
\begin{equation}\label{zero}
f(-1,q)=0, \qquad |q|<1.
\end{equation}
 The Rogers--Ramanujan
continued fraction
\begin{equation*}
R(q):=\df{q^{1/5}}{1}\+\df{q}{1}\+\df{q^2}{1}\+\df{q^3}{1}\+\cds,
\qquad |q|<1,
\end{equation*}
 also plays an important role in this paper.
%Recall the product representation \cite[p.~79]{III}
%\begin{equation*}
%R(q)=q^{1/5}\df{(q;q^5)_{\i}(q^4;q^5)_{\i}}{(q^2;q^5)_{\i}(q^3;q^5)_{\i}}.
%\end{equation*}
We  always assume that $q^{1/5}$ takes the principal value, so
that $R(-q)$ assumes a real value if $q\in(0,1)$.

\begin{theorem}\label{thm for a<6} Suppose that $q=e^{-\pi/x}$. Let $u=-q^{1/3}\chi(q)/\chi^3(q^3)$,
 $\alpha=1-\varphi^4(-q^8)/\varphi^4(q^8)$, and $\mu=R(-q)R^2(q^2)$.
 Then
\begin{align}
F_{(3,1)}(x)%=&-\frac{2\pi}{3x}\log\prod_{m=0}^{\infty}\frac{1+q^{6m+3}}{(1+q^{2m+1})^3}\\
=&-\frac{2\pi}{9x}\log\left(\frac{1+u^3}{1-8u^3}\right),\label{F13x formula}\\
F_{(4,1)}(x)
%=&\frac{\pi}{\sqrt{2}x}\log\prod_{m=0}^{\infty}
%\frac{1+\sqrt{2}q^{2m+1}+q^{4m+2}}{1-\sqrt{2}q^{2m+1}+q^{4m+2}}\\
=&-\frac{\pi}{\sqrt{2}x}\log\left(\frac{1-\sqrt[8]{\alpha}}{1+\sqrt[8]{\alpha}}\right),
\label{F14x formula}\\
F_{(5,1)}(x)=&-\frac{\pi}{5\sqrt{5}x}
  \log\left(\frac{2 - \mu + 18 \mu^2 + \mu^3 + 2 \mu^4 + 5 \sqrt{5} (\mu +
  \mu^3)}{
   2 - \mu + 18 \mu^2 + \mu^3 + 2 \mu^4 - 5 \sqrt{5}( \mu +\mu^3)}\right)\label{F15x formula}\\
    &-
 \frac{\pi}{5 x}\log\left(\frac{1 + \mu - \mu^2}{1 -4 \mu -\mu^2}\right),\notag\\
F_{(6,1)}(x)=&-\frac{\pi}{\sqrt{3}
x}\log\left(\frac{\varphi(-q^4)-3\varphi(-q^{36})+2\sqrt{3} q
f(-q^{24})}{\varphi(-q^4)-3\varphi(-q^{36})-2\sqrt{3} q
f(-q^{24})}\right).\label{F16x formula}
\end{align}
\end{theorem}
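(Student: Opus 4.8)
The plan is to start from Theorem~\ref{MT} with $b=1$, collapse the root-of-unity sum to a single logarithm of a product of classical $q$-series, and then invoke known $q$-series and modular identities to reach each of the four shapes. Throughout we may assume $0<q<1$: every identity below is an identity between functions holomorphic on $|q|<1$, so it extends to all admissible $x$ by analytic continuation.

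\emph{General reduction.} With $\omega=e^{\pi i/a}$ and $q=e^{-\pi/x}$ set
\begin{equation*}
P_j:=\prod_{m=0}^{\infty}\left(1-\omega^{2j+1}q^{2m+1}\right)\left(1-\omega^{-2j-1}q^{2m+1}\right)=\prod_{m=0}^{\infty}\left(1-2\cos\tfrac{(2j+1)\pi}{a}\,q^{2m+1}+q^{4m+2}\right),
\end{equation*}
so that \eqref{Fabx main formula} at $b=1$ reads $F_{(a,1)}(x)=-\frac{2\pi}{ax}\sum_{j=0}^{a-1}\omega^{-(2j+1)}\log P_j$. Each $P_j$ is real and positive for $0<q<1$, and reindexing by the involution $j\mapsto a-1-j$ (which fixes $P_j$ and carries $\omega^{-(2j+1)}$ to $\omega^{2j+1}$) shows this sum equals its own complex conjugate, hence
\begin{equation*}
F_{(a,1)}(x)=-\frac{2\pi}{ax}\sum_{j=0}^{a-1}\cos\!\left(\frac{(2j+1)\pi}{a}\right)\log P_j.
\end{equation*}
Since $\sum_{j=0}^{a-1}\cos\frac{(2j+1)\pi}{a}=0$, each $P_j$ may be replaced by $(q^2;q^2)_{\infty}P_j$, and by the Jacobi triple product \eqref{fabjtp} this equals $\sum_{n=-\infty}^{\infty}(-1)^n\omega^{(2j+1)n}q^{n^2}$, a classical theta series twisted by a root of unity. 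Pairing $j$ with $a-1-j$ (and noting $P_{(a-1)/2}=\chi^2(q)$ when $a$ is odd), a short computation gives
\begin{equation*}
F_{(3,1)}(x)=-\frac{2\pi}{3x}\log\frac{P_0}{P_1},\qquad F_{(4,1)}(x)=-\frac{\pi}{\sqrt2\,x}\log\frac{P_0}{P_1},\qquad F_{(6,1)}(x)=-\frac{\pi}{\sqrt3\,x}\log\frac{P_0}{P_2},
\end{equation*}
while for $a=5$, since $2\cos\frac{\pi}{5}=\frac{1+\sqrt5}{2}$ and $2\cos\frac{3\pi}{5}=\frac{1-\sqrt5}{2}$,
\begin{equation*}
F_{(5,1)}(x)=-\frac{\pi}{5x}\log\frac{P_0P_1}{P_2^2}-\frac{\pi}{5\sqrt5\,x}\log\left(\frac{P_0}{P_1}\right)^{5}.
\end{equation*}

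\emph{The cases $a\in\{3,4,6\}$.} For these one $a$-dissects the twisted theta series $\sum_n(-1)^n\omega^{(2j+1)n}q^{n^2}$ by the residue of $n$ modulo $a$, discards the degenerate residue classes using $f(-1,q)=0$ from \eqref{zero}, and recombines residues $r$ and $-r$ into theta functions. When $a=3$ the factorization $1-t+t^2=(1+t^3)/(1+t)$ gives directly $P_0=\chi(q^3)/\chi(q)$ and $P_1=\chi^2(q)$, so $F_{(3,1)}(x)=-\frac{2\pi}{9x}\log\!\left(\chi(q^3)/\chi^3(q)\right)^{3}$; a classical degree-$3$ modular equation for $\chi$ (equivalently, a known identity for Ramanujan's cubic continued fraction, which is what the parameter $u$ encodes) rewrites $\left(\chi(q^3)/\chi^3(q)\right)^{3}$ as $(1+u^3)/(1-8u^3)$, yielding \eqref{F13x formula}. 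When $a=4$ the $4$-dissection gives $(q^2;q^2)_{\infty}P_0=\varphi(-q^{16})-\sqrt2\,q\,f(-q^{24},-q^8)$ and $(q^2;q^2)_{\infty}P_1=\varphi(-q^{16})+\sqrt2\,q\,f(-q^{24},-q^8)$; feeding in Jacobi's identity $\varphi^4(q)-\varphi^4(-q)=16q\psi^4(q^2)$ together with the standard $\eta$-quotient expressions for $\varphi,\psi,f$ identifies $\sqrt2\,q\,f(-q^{24},-q^8)/\varphi(-q^{16})$ with $\alpha^{1/8}$, yielding \eqref{F14x formula}. When $a=6$ a $6$-dissection (in which the $r=3$ class vanishes by \eqref{zero}), the $3$-dissection $\varphi(-q^4)=\varphi(-q^{36})-2q^4f(-q^{60},-q^{12})$, and the triple-product identity $f(-q^{48},-q^{24})=f(-q^{24})$ together reduce $(q^2;q^2)_{\infty}P_0$ and $(q^2;q^2)_{\infty}P_2$ to $-\tfrac12\!\left(\varphi(-q^4)-3\varphi(-q^{36})\pm2\sqrt3\,q\,f(-q^{24})\right)$, which is \eqref{F16x formula}.

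\emph{The case $a=5$ is the main obstacle.} Here $P_0P_1=\chi(q^5)/\chi(q)$ and $P_2=\chi^2(q)$, so that $-\frac{\pi}{5x}\log(P_0P_1/P_2^2)=-\frac{\pi}{5x}\log\!\left(\chi(q^5)/\chi^5(q)\right)$, and everything must be rewritten through $\mu=R(-q)R^2(q^2)$. The route is to $5$-dissect the twisted theta series $\sum_n(-1)^n\omega^{(2j+1)n}q^{n^2}$ --- its pieces are governed by Ramanujan's dissection $\varphi(q)=\varphi(q^{25})+2q\,f(q^{15},q^{35})+2q^4f(q^5,q^{45})$ and its root-of-unity twists --- and then to invoke Ramanujan's relations tying $R$ to $f(-q)$ and $f(-q^5)$ (notably $R^{-1}(q)-1-R(q)=f(-q^{1/5})/\!\left(q^{1/5}f(-q^5)\right)$ and $R^{-5}(q)-11-R^5(q)=f^6(-q)/\!\left(q\,f^6(-q^5)\right)$) together with the theta-quotient form of $R$, so as to express $P_0,P_1,P_2$ as polynomials in $R(-q)$ and $R(q^2)$. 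One then verifies the two algebraic identities $\chi(q^5)/\chi^5(q)=(1+\mu-\mu^2)/(1-4\mu-\mu^2)$ and $\left(P_0/P_1\right)^{5}=\left(2-\mu+18\mu^2+\mu^3+2\mu^4+5\sqrt5(\mu+\mu^3)\right)/\left(2-\mu+18\mu^2+\mu^3+2\mu^4-5\sqrt5(\mu+\mu^3)\right)$, which together give \eqref{F15x formula}. The hard part is exactly this last reduction --- collapsing the three-term theta-function combinations to rational functions of the single quantity $\mu$ and confirming the quartic numerator and denominator --- whereas, once the general reduction is in place, the cases $a\in\{3,4,6\}$ amount to brief $q$-series bookkeeping.
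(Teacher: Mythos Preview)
Your reduction from Theorem~\ref{MT} to the cosine-weighted sums $\sum_j \cos\!\frac{(2j+1)\pi}{a}\log P_j$, together with the treatment of $a\in\{3,4,6\}$ via product factorizations and theta dissections, is essentially the paper's argument; the specific identities you invoke (the degree-$3$ $\chi$-equation, Jacobi's $\varphi^4(q)-\varphi^4(-q)=16q\psi^4(q^2)$, the $6$-dissection) are equivalent to the entries the paper cites from Ramanujan's notebooks.

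The real divergence is at $a=5$. You propose to $5$-dissect the twisted theta series $\sum_n(-1)^n\omega^{(2j+1)n}q^{n^2}$, express the pieces through $f$-functions, and then push everything down to $R(-q)$ and $R(q^2)$ via $R^{-1}-1-R$ and $R^{-5}-11-R^5$ before collapsing to $\mu$. That route is defensible but laborious, and you yourself flag the final collapse as unfinished. The paper avoids all of this by quoting a single identity from Ramanujan's lost notebook (Andrews--Berndt, Part~I, Entry~1.4.1), which gives directly
\[
\prod_{m\ \mathrm{odd}}\frac{1-\beta q^m+q^{2m}}{1-\alpha q^m+q^{2m}}
=\left(\frac{(1-\alpha^5 R^5(-q))(1-\beta^5 R^5(q^2))}{(1-\beta^5 R^5(-q))(1-\alpha^5 R^5(q^2))}\right)^{1/5},
\]
and then uses the known parametrizations $R^5(-q)=\mu\!\left(\tfrac{1-\mu}{1+\mu}\right)^2$, $R^5(q^2)=\mu^2\!\left(\tfrac{1+\mu}{1-\mu}\right)$ and the lost-notebook identity $\chi(q^5)/\chi^5(q)=(1+\mu-\mu^2)/(1-4\mu-\mu^2)$. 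This turns the ``hard part'' you identify into a short rational-function substitution. Your dissection approach would eventually rederive Entry~1.4.1 from first principles, which is why it feels heavy; if you want a self-contained proof, that is the identity to aim for, rather than working with the individual $P_j$.
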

\begin{proof} We begin by proving \eqref{F13x formula}.
If we set $(a,b)=(3,1)$, then \eqref{Fabx main formula}
immediately reduces to
\begin{align}
F_{(3,1)}(x)&=-\frac{2\pi}{3x}\sum_{j=0}^{2}\cos\left(\frac{\pi(2j+1)}{3}\right)
\log\prod_{m=0}^{\infty}\left(1-2\cos\left(\frac{\pi
(2j+1)}{3}\right)q^{2m+1}+q^{4m+2}\right)\notag\\
&=-\frac{2\pi}{3x}\log\prod_{m=0}^{\infty}\frac{1-q^{2m+1}+q^{4m+2}}{1+2q^{2m+1}+q^{4m+2}}
=-\frac{2\pi}{3x}\log\frac{\chi(q^3)}{\chi^3(q)}.\label{f31 in
terms of chi}
\end{align}
To finish the calculation, let us briefly take
$\alpha=1-\varphi^4(-q)/\varphi^4(q)$ and
$\beta=1-\varphi^4(-q^3)/\varphi^4(q^3)$.  Then by the inversion
formula for $q^{-1/24}\chi(q)$ \cite[p.~124, Entry 12(v)]{III}, we
have
\begin{align*}
\frac{\chi^3(q)}{\chi(q^3)}=2^{1/3}\left(\frac{\beta(1-\beta)}{\alpha^3(1-\alpha)^3}\right)^{1/24},
&&\frac{\chi^3(q^3)}{q^{1/3}\chi(q)}=2^{1/3}\left(\frac{\alpha(1-\alpha)}{\beta^3(1-\beta)^3}\right)^{1/24}.
\end{align*}
It is known that $\alpha$ and $\beta$ admit birational
parametrizations $\alpha=p(2+p)^3/(1+2p)^3$ and
$\beta=p^3(2+p)/(1+2p)$ \cite[~p.~230, Entry 5(vi)]{III}. Thus we
obtain
\begin{align*}
\frac{\chi(q^3)}{\chi^3(q)}=\left(\frac{(1-p)(2+p)}{2(1+2p)^2}\right)^{1/3},
&&\frac{q^{1/3}\chi(q)}{\chi^3(q^3)}=\left(\frac{p(1+p)}{2}\right)^{1/3}.
\end{align*}
Finally, it is easy to see that if
$u:=-q^{1/3}\chi(q)/\chi^3(q^3)$, then
\begin{equation*}
\frac{\chi(q^3)}{\chi^3(q)}=\left(\frac{1+u^3}{1-8u^3}\right)^{1/3}.
\end{equation*}
Substituting this last result into \eqref{f31 in terms of chi}
completes the proof of \eqref{F13x formula}.

Next we prove \eqref{F14x formula}.  Notice that if $(a,b)=(4,1)$,
then \eqref{Fabx main formula} becomes
\begin{align}
F_{(4,1)}(x)&=-\frac{2\pi}{4x}\sum_{j=0}^{3}\cos\left(\frac{\pi(2j+1)}{4}\right)
\log\prod_{m=0}^{\infty}\left(1-2\cos\left(\frac{\pi
(2j+1)}{4}\right)q^{2m+1}+q^{4m+2}\right)\notag\\
&=-\frac{\pi}{\sqrt{2}x}\log\prod_{m=0}^{\infty}\frac{(1-\sqrt{2}q^{2m+1}+q^{4m+2})(1-q^{2m+2})}
{(1+\sqrt{2}q^{2m+1}+q^{4m+2})(1-q^{2m+2})}.\label{F14nsimp}
\end{align}
Letting $\omega=e^{\pi i/4}$ and using the Jacobi triple product
identity \eqref{fabjtp}, we find that the denominator on the far
right side of \eqref{F14nsimp} is equal to
\begin{equation}
\begin{split}\label{hey}
F(q):=\prod_{m=0}^{\infty}&\left(1+\sqrt{2}q^{2m+1}+q^{4m+2}\right)\left(1-q^{2m+2}\right)\\
&=(-\omega
q;q^2)_{\infty}(-\bar{\omega}q;q^2)_{\infty}(q^2;q^2)_{\infty}\\
&=\sum_{m=-\infty}^{\infty}\omega^n q^{n^2}\\
&=\sum_{n=-\infty}^{\infty}(-1)^n\left(q^{(4n)^2}+\omega
q^{(4n+1)^2}+i q^{(4n+2)^2}+\omega^3 q^{(4n+3)^2}\right).
\end{split}
\end{equation}
Because the initial infinite product is real-valued, the imaginary
terms above sum to 0.  Alternatively, this fact also follows from
\eqref{zero}.  Hence, from \eqref{hey},
\begin{align}
F(q)&=\varphi(-q^{16})+\frac{1}{\sqrt{2}}
\sum_{n=-\infty}^{\infty}(-1)^n\left(q^{(4n+1)^2}-q^{(4n+3)^2}\right)\notag\\
&=\varphi(-q^{16})+\sqrt{2}\sum_{n=0}^{\infty}(-1)^{\frac{n(n+1)}{2}}
q^{(2n+1)^2}\notag\\
&=\varphi(-q^{16})+\sqrt{2}q\psi(-q^8).\label{F14 intermed triple}
\end{align}
A similar argument provides a similar representation for the
numerator on the far right side of \eqref{F14nsimp}. Hence, using
\eqref{F14 intermed triple} and its aforementioned analogue in
\eqref{F14nsimp}, we are led to the closed form
\begin{equation*}
F_{(4,1)}(x)=-\frac{\pi}{\sqrt{2}
x}\log\frac{\varphi(-q^{16})-\sqrt{2}q\psi(-q^8)}{\varphi(-q^{16})+\sqrt{2}q\psi(-q^8)}.
\end{equation*}
  If we take
$\alpha=1-\varphi^4(-q^8)/\varphi^4(q^8)$ and $z=\varphi^2(q^8)$,
 this expression reduces to \eqref{F14x formula} after applying
\cite[~p.~122, Entry 10(iii)]{III} and \cite[~p.~123, Entry
11(ii)]{III}.

Next we prove \eqref{F15x formula}.  This case is substantially
more difficult than the previous two.  If $(a,b)=(5,1)$, then
\eqref{Fabx main formula} becomes
\begin{align}
F_{(5,1)}(x)&=-\frac{2\pi}{5x}\sum_{j=0}^{4}\cos\left(\frac{\pi(2j+1)}{5}\right)
\log\prod_{m=0}^{\infty}\left(1-2\cos\left(\frac{\pi
(2j+1)}{5}\right)q^{2m+1}+q^{4m+2}\right)\notag\\
&=-\frac{\pi}{5x}\log\frac{\chi(q^5)}{\chi^5(q)}-
\frac{\pi}{\sqrt{5}x}\log\prod_{\text{$m$
odd}}^{\infty}\frac{1-\beta q^{m}+q^{2m}}{1-\alpha
q^{m}+q^{2m}},\label{F15nsimp}
\end{align}
where $\beta=\frac{1+\sqrt{5}}{2}$ and
$\alpha=\frac{1-\sqrt{5}}{2}$.  The second equality in
\eqref{F15nsimp} was obtained by collecting terms of the form
$\sqrt{5}\log(X)$.  Now we use several entries from Ramanujan's
lost notebook.  By \cite[pp.~21--22, Entry 1.4.1, Eqs.~(1.4.3),
(1.4.4)]{geaI},
\begin{equation}\label{thm f15x eval even part}
\prod_{\text{$m$ odd}}\left(\frac{1-\beta q^m+q^{2m}}{1-\alpha
q^m+q^{2m}}\right)=\sqrt[5]{\frac{\left(1-\alpha^5
R^5(-q)\right)\left(1-\beta^5 R^5(q^2)\right)}{\left(1-\beta^5
R^5(-q)\right)\left(1-\alpha^5 R^5(q^2)\right)}}.
\end{equation}
By \cite[p.~33]{geaI}, we can parameterize $R^5(-q)$ and
$R^5(q^2)$ in terms of $\mu=R(-q)R^2(q^2)$ with the identities
\begin{align}\label{R in terms of mu}
R^5(-q)=\mu\left(\frac{1-\mu}{1+\mu}\right)^2,&&R^5(q^2)=\mu^2\left(\frac{1+\mu}{1-\mu}\right).
\end{align}
Therefore \eqref{thm f15x eval even part} becomes
\begin{equation}\label{thm f15x big log done}
\prod_{\text{$m$ odd}}\left(\frac{1-\beta q^m+q^{2m}}{1-\alpha
q^m+q^{2m}}\right)=\sqrt[5]{\frac{2 - \mu + 18 \mu^2 + \mu^3 + 2
\mu^4 + 5 \sqrt{5} (\mu +
  \mu^3)}{
   2 - \mu + 18 \mu^2 + \mu^3 + 2 \mu^4 - 5 \sqrt{5}( \mu +\mu^3)}}.
\end{equation}
Finally, notice that $1/\chi(-q)=(-q;q)_{\infty}$. After replacing
$q$ by $-q$ in \cite[~p.~37, Entry 1.8.5]{geaI} and simplifying,
we have
\begin{equation}\label{thm f15x little log done}
\frac{\chi\left(q^5\right)}{\chi^5\left(q\right)}=\frac{1+\mu-\mu^2}{1-4\mu-\mu^2}.
\end{equation}
Substituting \eqref{thm f15x big log done} and \eqref{thm f15x
little log done} into \eqref{F15nsimp} concludes the proof of
\eqref{F15x formula}.

The proof of \eqref{F16x formula} is similar to the proof of
\eqref{F14x formula}, and we leave this calculation as an exercise
for the reader.  Note that the operative result
\begin{equation*}
\prod_{n=0}^{\infty}\left(\frac{1+\sqrt{3}q^{2n+1}+q^{4n+2}}{1-\sqrt{3}q^{2n+1}+q^{4n+2}}\right)
=\frac{-\varphi(-q^4)+3\varphi(-q^{36})+2\sqrt{3}q
f(-q^{24})}{-\varphi(-q^4)+3\varphi(-q^{36})-2\sqrt{3}q
f(-q^{24})}
\end{equation*}
follows easily from the Jacobi triple product identity
\eqref{fabjtp}.
\end{proof}

Now we derive some explicit examples from Theorem \ref{thm for
a<6}. All of our identities follow from well-known $q$-series
evaluations.  We begin with $F_{(3,1)}(x)$.  Notice by
\cite[~p.~95, Eq.~3.3.6]{geaI}, that $u=G(-q)$, where $G(q)$
denotes Ramanujan's cubic continued fraction defined by
$$
G(q):=\df{q^{1/3}}{1}\+\df{q+q^2}{1}\+\df{q^2+q^4}{1}\+\df{q^3+q^6}{1}\+\cds,
\qquad |q|<1.$$
 It follows that $F_{(3,1)}(x)$ can be evaluated by
using formulas for $G(-q)$. When $x=1$ we appeal to \cite[~p.~100,
Eq.~(3.4.1)]{geaI} to find that
$u=G(-e^{-\pi})=\frac{1-\sqrt{3}}{2}$, which yields
\begin{equation}
\sum_{n=-\infty}^{\infty}\sum_{m=-\infty}^{\infty}\frac{(-1)^{m+n}}{m^2+(3n+1)^2}
=\frac{2\pi}{9}\log\left(2(\sqrt{3}-1)\right).
\end{equation}
When $x=\frac{1}{\sqrt{5}}$ we appeal to \cite[~p.~101, Theorem
3.4.2]{geaI}. We have $u=G(-e^{-\pi\sqrt{5}})=\frac{(\sqrt{5} - 3)
(\sqrt{5} - \sqrt{3})}{4}$, and therefore
\begin{equation}
\sum_{n=-\infty}^{\infty}\sum_{m=-\infty}^{\infty}\frac{(-1)^{m+n}}{m^2+5(3n+1)^2}=\frac{\pi}{9\sqrt{5}}
\log\left(8(4-\sqrt{15})\right).
\end{equation}
It is possible to obtain many additional formulas for
$F_{(3,1)}(x)$, by applying formulas in
\cite[~pp.~100--105]{geaI}.

Now we examine $F_{(4,1)}(x)$.  This function is easy to examine,
because when $q=\exp(-\pi\sqrt{n})$, where $n\in \mathbb{Q}^+$,
the values of $\alpha_n$ are called singular moduli and can always
be calculated in terms of algebraic numbers \cite[p.~214]{cox}.
Their values have been extensively tabulated. For example, many
explicit evaluations of $\alpha_n$ can be found in
\cite[~pp.~281--306]{V}. Since $\alpha$ has an argument of
$q^8=e^{-8\pi/x}$, we write $\alpha=\alpha_{64/x^2}$. When $x=8$,
then $\alpha=\alpha_1=1/2$, and therefore
\begin{equation}
\sum_{n=-\infty}^{\infty}\sum_{m=-\infty}^{\infty}\frac{(-1)^{m+n}}{(8m)^2+(4n+1)^2}=\frac{\pi}{8\sqrt{2}}
\log\left(\frac{\sqrt[8]{2}+1}{\sqrt[8]{2}-1}\right).
\end{equation}
Similarly, when $x=4/\sqrt{7}$, we have \cite[p.~284]{V}
$\alpha=\alpha_{28}=(\sqrt{2}-1)^8\left(2\sqrt{2}-\sqrt{7}\right)^4$
\cite[~p.~284]{III}. Thus we obtain
\begin{equation}
\sum_{n=-\infty}^{\infty}\sum_{m=-\infty}^{\infty}\frac{(-1)^{m+n}}{(4m)^2+7(4n+1)^2}
=\frac{\pi}{4\sqrt{14}}\log\left(\frac{1+( \sqrt{2}-1) \sqrt{2
\sqrt{2} - \sqrt{7}}}{1-( \sqrt{2}-1) \sqrt{2 \sqrt{2} -
\sqrt{7}}}\right).
\end{equation}
There are many similar identities that follow from results in
\cite{V}, but they often tend to be very complicated. The majority
of the identities contain algebraic numbers involving nested
radicals.

We conclude this section by proving a pair of formulas for
$F_{(5,1)}(x)$.  By \eqref{F15x formula}, this requires
calculating the parameter $\mu$. In principle, these calculations
are straight-forward exercises. If the values of both $R(-q)$ and
$R(q^2)$ are known, then calculating $\mu=R(-q)R^2(q^2)$ is
trivial.  If only one of the values is known, then $\mu$ can be
calculated by solving \eqref{R in terms of mu}.  This second type
of calculation requires solving a cubic equation. If neither value
is known, then we can use \eqref{thm f15x little log done} to
calculate $\mu$.  In practice, we have only been able to identify
two instances where $\mu$ is reasonably simple.

We begin by setting $x=1$ in \eqref{F15x formula}. By
\cite[~pp.~57--58]{geaI},
\begin{equation*}\label{mu x=1 case}
\mu=R(-e^{-\pi})R^2(e^{-2\pi})=\frac{1}{8}(3 - \sqrt{5}) (7 + 3
\sqrt{5}) \left(4 + 2 \sqrt{5} -
        \sqrt{10 (5 + \sqrt{5})}\right).
\end{equation*}
 Substituting this last result into \eqref{F15x formula} and
simplifying with \texttt{Mathematica} leads to
\begin{equation}\label{F511 formula}
\begin{split}
\sum_{n=-\infty}^{\infty}\sum_{m=-\infty}^{\infty}&\frac{(-1)^{m+n}}{m^2+(5n+1)^2}\\
&=-\frac{\pi}{5\sqrt{5}}\log\left(-19+9\sqrt{5}-3\sqrt{85-38\sqrt{5}}\right)
+\frac{\pi}{5}\log\left(\sqrt{5}-1\right).
\end{split}
\end{equation}

Now we examine the more difficult case when $x=5$. This choice of
$x$ leads to \eqref{Lamb's formula} quoted in our Introduction. We
 calculate $\mu$ using \eqref{thm f15x little log done} and the values of class
 invariants $G_n$, which are defined by
 \begin{equation*}
 G_n:=2^{-1/4}q^{-1/24}\chi(q),
 \end{equation*}
 where $q=\exp(-\pi\sqrt{n})$.  Hence, using the fact that $G_n=G_{1/n}$ and the value of $G_{25}$
 \cite[~p.~190]{V}, we find that
\begin{equation*}
\frac{1+\mu-\mu^2}{1-4\mu-\mu^2}=\frac{\chi\left(e^{-\pi}\right)}{\chi^5\left(e^{-\pi/5}\right)}
=\frac{G_{1}}{2
G_{\frac{1}{25}}^5}=\frac{G_{1}}{2
G_{25}^5}=\frac{1}{2}\left(\frac{\sqrt{5}-1}{2}\right)^5.
\end{equation*}
Therefore $\mu$ is given by
\begin{equation*}
\mu= \frac{1}{4}\left(3 + \sqrt{5}\right)\left(-4 + 2 \sqrt{5} -
\sqrt{2 (25 - 11 \sqrt{5})}\right).
\end{equation*}
Substituting this last result into \eqref{F15x formula}, and then
simplifying nested radicals, we complete the proof of
\eqref{Lamb's formula}.

%\begin{proposition} Recall that $\mu=R(-q)R^2(q^2)$, where
%$q=e^{-\pi/x}$. We have the following evaluations:
%\begin{equation*}
%%\begin{table}[h]\label{table 1}
%%\begin{center}
%    \begin{tabular}{|c|c|c|p{6 in}|}%{|p{1 in}|p{2.5 in}|p{2.25 in}|}
%        \hline
%        $x$ &  $\mu$ \\
%        \hline%%
%
%        $1$ & $\frac{1}{8}(3 - \sqrt{5}) (7 + 3 \sqrt{5}) \left(4 + 2 \sqrt{5} -
%        \sqrt{10 (5 + \sqrt{5})}\right)$ \\
%        $2$ & $-$ \\
%        $\sqrt{5}$ & $-$\\
%        $5$ & $\frac{1}{4}\left(3 + \sqrt{5}\right)\left(-4 + 2 \sqrt{5}
%- \sqrt{2 (25 - 11 \sqrt{5})}\right)$\\
%        \hline
%\end{tabular}
%%\end{center}
%%\end{table}
%\end{equation*}
%\end{proposition}
\section{Simplification for higher values}

In this section, we evaluate $F_{(a,b)}(x)$ when $a>6$.  In order
to simplify the calculations, we restrict our attention to cases
where $a\in2\mathbb{Z}$.  In these cases we can apply elementary
properties of Jacobian elliptic functions.  Let us briefly recall
that the elliptic functions $\sn(u)$, $\cn(u)$, and $\dn(u)$ are
doubly-periodic, meromorphic functions, which depend implicitly on
a parameter $\alpha=k^2$, where $k$ is called the elliptic
modulus. Their periods are integral multiples of $K$ and $i K'$,
where $K$ and $K'$ are complete elliptic integrals of the first
kind associated with the moduli $k$ and $k^{\prime}=\sqrt{1-k^2}$,
respectively.   For us, the representations in terms of
hypergeometric functions $_2F_1$ \cite[p.~102]{III}
\begin{align*}
K:=\frac{\pi}{2}{_2F_1}\left(\frac{1}{2},\frac{1}{2};1;\alpha\right),
&&K':=\frac{\pi}{2}{_2F_1}\left(\frac{1}{2},\frac{1}{2};1;1-\alpha\right)
\end{align*}
are employed in the sequel.  There is a well-known inverse
relation between the modulus, and the elliptic nome $q$ given by
\cite[p.~102]{III}
\begin{align*}
\alpha=1-\frac{\varphi^4(-q)}{\varphi^4(q)},
&&q=\exp\left(-\pi\frac{{_2F_1}\left(\frac{1}{2},\frac{1}{2};1;1-\alpha\right)}{{_2F_1}\left(\frac{1}{2},
\frac{1}{2};1;\alpha\right)}\right).
\end{align*}

\begin{theorem}\label{Jacobi Theorem}Suppose that $\alpha\in(0,1)$, and let
\begin{equation}\label{x in terms of alpha}
x=\frac{{_2F_1}\left(\frac{1}{2},\frac{1}{2};1;\alpha\right)}
{{_2F_1}\left(\frac{1}{2},\frac{1}{2};1;1-\alpha\right)}.
\end{equation}
Assume that $a\in\{2,4,6\dots\}$, $b\in\mathbb{Z}$, and $(a,b)=1$.
Then
\begin{equation}\label{Dn summation formula}
\begin{split}
F_{(a,b)}(x) &=\frac{\pi}{a
x}\sum_{j=0}^{a-1}\cos\left(\frac{\pi(2j+1)
b}{a}\right)\log\left(\dn\left(\frac{(2 j + 1)K}{a}\right)\right).
\end{split}
\end{equation}
\end{theorem}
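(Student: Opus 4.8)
The plan is to feed the general formula \eqref{Fabx main formula} of Theorem~\ref{MT} into the classical product expansion of the Jacobian function $\dn$. Keep $\omega=e^{\pi i/a}$ and $q=e^{-\pi/x}$, and for $0\le j\le a-1$ write
\begin{equation*}
P_j:=\prod_{m=0}^{\infty}\left(1-\omega^{2j+1}q^{2m+1}\right)\left(1-\omega^{-2j-1}q^{2m+1}\right),
\end{equation*}
so that \eqref{Fabx main formula} reads $F_{(a,b)}(x)=-\frac{2\pi}{ax}\sum_{j=0}^{a-1}\omega^{-(2j+1)b}\log P_j$. Since $a$ is even and $(a,b)=1$, the integer $b$ is odd; this is precisely what the hypothesis $a\in2\mathbb{Z}$ buys. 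By \eqref{x in terms of alpha}, $x=K/K'$, so $q\in(0,1)$, and then each factor above equals $|1-\omega^{2j+1}q^{2m+1}|^2>0$; hence $P_j>0$ and $\log P_j$ is real, while swapping $\omega^{2j+1}\leftrightarrow\omega^{-2j-1}$ gives $P_{a-1-j}=P_j$. Pairing $j$ with $a-1-j$ (there is no fixed index when $a$ is even) and using $\omega^{2a}=1$ replaces $\omega^{-(2j+1)b}$ by its real part, so
\begin{equation*}
F_{(a,b)}(x)=-\frac{2\pi}{ax}\sum_{j=0}^{a-1}\cos\left(\frac{\pi(2j+1)b}{a}\right)\log P_j.
\end{equation*}

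Next I would invoke the classical product formula for $\dn$ relative to the nome $q=e^{-\pi K'/K}$,
\begin{equation*}
\dn u=\prod_{m=1}^{\infty}\frac{1-2q^{2m-1}+q^{4m-2}}{1+2q^{2m-1}+q^{4m-2}}\cdot\frac{1+2q^{2m-1}\cos(\pi u/K)+q^{4m-2}}{1-2q^{2m-1}\cos(\pi u/K)+q^{4m-2}}.
\end{equation*}
Evaluating at $u=(2j+1)K/a$ gives $\cos(\pi u/K)=\cos(\pi(2j+1)/a)=\tfrac12(\omega^{2j+1}+\omega^{-2j-1})$, so $1\pm2q^{2m-1}\cos(\pi u/K)+q^{4m-2}=(1\pm q^{2m-1}\omega^{2j+1})(1\pm q^{2m-1}\omega^{-2j-1})$. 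Here is where evenness of $a$ enters decisively: $-\omega^{2j+1}=\omega^{2(j+a/2)+1}$, so the numerator factors $(1+q^{2m-1}\omega^{2j+1})(1+q^{2m-1}\omega^{-2j-1})$ are exactly the defining factors of $P_{j+a/2}$ (indices read modulo $a$, using $\omega^{2a}=1$), the denominator factors are those of $P_j$, and the $j$-independent part is $\prod_{m\ge1}(1-q^{2m-1})^2/(1+q^{2m-1})^2$. Therefore
\begin{equation*}
\dn\left(\frac{(2j+1)K}{a}\right)=\prod_{m=1}^{\infty}\frac{(1-q^{2m-1})^2}{(1+q^{2m-1})^2}\cdot\frac{P_{j+a/2}}{P_j}.
\end{equation*}

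Finally I would take logarithms, multiply by $\cos(\pi(2j+1)b/a)$, and sum over $0\le j\le a-1$. The $j$-independent prefactor drops out since $\sum_{j=0}^{a-1}\cos(\pi(2j+1)b/a)=\Re(\omega^b\sum_{j}e^{2\pi ijb/a})=0$, because $(a,b)=1$ and $a\ge2$. In the $\log P_{j+a/2}$ piece the substitution $j\mapsto j-a/2$ turns $\cos(\pi(2j+1)b/a)$ into $\cos(\pi(2j+1)b/a-\pi b)=(-1)^b\cos(\pi(2j+1)b/a)=-\cos(\pi(2j+1)b/a)$, using that $b$ is odd. Hence $\sum_j\cos(\pi(2j+1)b/a)\log\dn((2j+1)K/a)=-2\sum_j\cos(\pi(2j+1)b/a)\log P_j$, and combining this with the displayed formula for $F_{(a,b)}(x)$ above gives \eqref{Dn summation formula}. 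I expect the only real fussiness to be in pinning down the half-argument and nome conventions in the $\dn$ product and in checking positivity of the $P_j$ so that all logarithms are genuinely real; the structural content is just that $a$ even forces $b$ odd and sends $-\omega^{2j+1}$ to another root of unity of the same form.
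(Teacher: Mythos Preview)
Your argument is correct and uses the same ingredients as the paper: the product expansion of $\dn$, the parity constraint that $a$ even forces $b$ odd, and the vanishing of $\sum_{j}\omega^{-(2j+1)b}$ to kill the $j$-independent factor $(1-\alpha)^{1/4}=\sqrt{k'}$. The organization differs slightly. The paper applies \eqref{Fabx main formula} a second time with $1/x\mapsto 1/x+i$ (equivalently $q\mapsto -q$) and subtracts, so that the right side becomes the logarithm of the ratio of the ``$-$'' and ``$+$'' products, which is identified with $(1-\alpha)^{1/4}\dn((2j+1)K/a)$; on the left side the factor $1-(-1)^{an+b}$ collapses to $2$ once $a$ is even and $b$ is odd. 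You bypass the $q\mapsto -q$ step by observing directly that for even $a$ the numerator factors $(1+q^{2m-1}\omega^{\pm(2j+1)})$ are exactly the defining factors of $P_{j+a/2}$, and then an index shift $j\mapsto j-a/2$ together with $(-1)^b=-1$ doubles the $\log P_j$ sum. These are two phrasings of the same computation; your version is arguably the more transparent of the two, since it never leaves the real domain and makes explicit where each hypothesis is used.
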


\begin{proof}  We have already established that \eqref{Fabx main formula}
is true if $x\in(0,\infty)$. The identity remains valid if we let
$1/x\rightarrow1/x+i$.  This substitution has the effect of
sending $q\rightarrow -q$. Combining the two identities and
performing a great deal of simplification leads to
\begin{equation*}
\begin{split}
\sum_{n=-\infty}^{\infty}\sum_{m=-\infty}^{\infty}&\frac{(-1)^{m+n}(1-(-1)^{a
n+b})}{(x m)^2+(a
n+b)^2}\\
&=-\frac{2\pi}{a x}\sum_{j=0}^{a-1}\omega^{-(2j+1)
b}\log\left(\prod_{m=0}^{\infty}\frac{\left(1-\omega^{2j+1}
q^{2m+1}\right)\left(1-\omega^{-2j-1}q^{2m+1}\right)}{\left(1+\omega^{2j+1}
q^{2m+1}\right)\left(1+\omega^{-2j-1}q^{2m+1}\right)}\right).
\end{split}
\end{equation*}
Taking note of \eqref{x in terms of alpha}, and then recalling the
product representation for $\dn(u)$ \cite[~p.~918]{AS}, we find
that the last expression transforms into
\begin{equation*}
\begin{split}
\sum_{n=-\infty}^{\infty}\sum_{m=-\infty}^{\infty}&\frac{(-1)^{m+n}(1-(-1)^{a
n+b})}{(x m)^2+(a
n+b)^2}\\
&=\frac{2\pi}{a x}\sum_{j=0}^{a-1}\omega^{-(2j+1)
b}\log\left((1-\alpha)^{1/4}\dn\left(\frac{(2j+1)K}{a}\right)\right).
\end{split}
\end{equation*}
If we assume that $a$ is even and $b$ is odd, then the left-hand
side of the identity equals $2F_{(a,b)}(x)$.  We can recover
\eqref{Dn summation formula} by noting that $\sum_{j}
\omega^{-(2j+1) b}=0$ whenever $(a,b)=1$.
\end{proof}

In order to provide an application of  \eqref{Dn summation
formula}, we evaluate $F_{(10,1)}(1)$ explicitly.  While we
restrict our attention to this single example, the method we
describe extends to many additional values of $F_{(a,b)}(x)$.
 Let us recall that $\dn(u)$ has real period $2K$. If we use the
symmetries \cite[p.~500]{ww}
\begin{align}\label{d}
\dn(2K-u)=\dn(u),&& \dn(K-u)=\frac{\sqrt{1-\alpha}}{\dn(u)},
\end{align}
 then \eqref{Dn summation formula} reduces to an
expression involving $[\frac{a-1}{4}]$ elliptic functions. When
$(a,b)=(10,1)$, we have
\begin{equation}\label{10n+1 example}
\begin{split}
\frac{5x}{\pi}F_{(10,1)}(x)=&\sqrt{\frac{5 -\sqrt{5}}{2}}
\log\left(\dn\left(\frac{K}{10}\right)\right)- \sqrt{\frac{5
+\sqrt{5}}{2}} \log\left(\dn\left(\frac{3
K}{10}\right)\right) \\
&+
 \frac{\sqrt{5 - 2\sqrt{5}}}{4} \log(1 - \alpha).
\end{split}
\end{equation}
By  \eqref{x in terms of alpha}, it is possible to calculate
$\alpha$ whenever $x^2\in\mathbb{Q}^{+}$ and $x>0$. It just
remains to compute the values of the elliptic functions.

Notice that $\dn(r K/s)$ is an algebraic function of $\alpha$ if
$(r,s)\in\mathbb{Z}^2$.  This is a consequence of the fact that
elliptic functions obey addition formulas \cite[~p.~574]{AS}.
Perhaps the easiest method for calculating values such as
$\dn(K/10)$ and $\dn(3K/10)$ is to generate polynomials (but not
necessarily minimal ones) which they satisfy, by iterating the
duplication formula for $\dn(z)$ \cite[~p.~574]{AS}. Let us recall
that
\begin{equation}\label{dup}
\dn(2z)=f(\dn(z)),
\end{equation}
where
\begin{equation}\label{f(x) definition}
f(x):=\frac{x^2+(x^2-1)(1+\frac{1}{\alpha}(x^2-1))}{x^2-(x^2-1)(1+\frac{1}{\alpha}(x^2-1))}.
\end{equation}
For brevity, we use the shorthand notation
\begin{align*}
d_j:=\dn\left(\frac{j K}{10}\right).
\end{align*}
Using \eqref{d} and \eqref{dup}, we can easily show that
\begin{align*}
d_{2j}&=f(d_j),\\
d_j&=d_{20-j},\\
d_{10-j}&=\frac{\sqrt{1-\alpha}}{d_j},\\
d_5&=\sqrt[4]{1-\alpha}.
\end{align*}
As a consequence of the elementary properties above, it is easy to
deduce that
\begin{equation}\label{d1 and d3 polynomial}
\begin{split}
f(d_1) f(f(f(d_1)))-\sqrt{1-\alpha}=0,\\
f(d_3) f(f(f(d_3)))-\sqrt{1-\alpha}=0.
\end{split}
\end{equation}
For instance, notice that $f(d_1)f(f(f(d_1)))=d_2
f(f(d_2))=d_2f(d_4)=d_2 d_8=\sqrt{1-\alpha}$. It follows
immediately that $\dn(K/10)$ and $\dn(3K/10)$ are conjugate zeros
of a function which is rational in $\mathbb{Q}(\sqrt{1-\alpha})$.
It is easy to extract a polynomial which $d_1$ and $d_3$ satisfy,
by considering only the numerator of
$f(x)f(f(f(x)))-\sqrt{1-\alpha}$.

Now we can finish the computation of $F_{(10,1)}(1)$. Equation
\eqref{x in terms of alpha} shows that $x=1$ when $\alpha=1/2$. As
a result, \eqref{10n+1 example} becomes
\begin{equation}\label{10n+1 examples}
\begin{split}
\frac{5}{\pi}\sum_{n=-\infty}^{\infty}\sum_{m=-\infty}^{\infty}\frac{(-1)^{n+m}}{m^2+(10
n+1)^2}=&\sqrt{\frac{5 -\sqrt{5}}{2}} \log\left(d_1\right)-
\sqrt{\frac{5 +\sqrt{5}}{2}} \log\left(d_3\right) \\
 &-
 \frac{\sqrt{5 - 2\sqrt{5}}}{4} \log 2,
\end{split}
\end{equation}
where
\begin{align*}
d_1=\dn(K/10)\approx0.9915\dots,&& d_3=\dn(3K/10)\approx
0.9309\dots.
\end{align*}
If we use \texttt{Mathematica} to expand \eqref{d1 and d3
polynomial}, then it is easy to see that $d_1$ and $d_3$ are
conjugate zeros of the irreducible polynomial
\begin{equation*}\label{10n+1 minimal polynomial}
\begin{split}
0=&1 + 32 X^2 - 1152 X^4 + 14528 X^6 - 103328 X^8 + 445056 X^{10}
-
  747008 X^{12}\\
 &- 5859584 X^{14} + 67132864 X^{16} - 404289024 X^{18} +
  1770485760 X^{20}\\
   &- 6097568768 X^{22} + 17124502016 X^{24} -
  40180561920 X^{26} + 80299532288 X^{28}\\
  & - 138787278848 X^{30} +
  209592829440 X^{32} - 277574557696 X^{34}\\
   &+ 321198129152 X^{36} -
  321444495360 X^{38} + 273992032256 X^{40}\\
  & - 195122200576 X^{42} +
  113311088640 X^{44} - 51748995072 X^{46}\\
  & + 17186013184 X^{48} -
  3000107008 X^{50} - 764936192 X^{52}\\
  & + 911474688 X^{54} -
  423231488 X^{56} + 119013376 X^{58} - 18874368 X^{60}\\
  & + 1048576 X^{62} +
  65536 X^{64}.
  \end{split}
\end{equation*}
The calculation is essentially complete, but we provide a few
additional comments. Despite the fact that \texttt{Mathematica}
could not solve this equation directly, it is possible to express
$d_1$ and $d_3$ in terms of radicals, as we demonstrate below.  It
is unfortunate that the formulas are prohibitively complicated.

We conclude by briefly describing how to recover explicit formulas
for $d_1$ and $d_3$.  First notice that if $f(x)=y$, with $f(x)$
defined in \eqref{f(x) definition}, then we can express $x$ in
terms of $y$ by solving quadratic equations. Since
$\dn(2K/5)=f(f(d_1))$ and $\dn(6K/5)=f(f(d_3))$, it is sufficient
to reduce $\dn(2K/5)$ and $\dn(6K/5)$ to radicals. There are
several methods to accomplish this calculation. The simplest
approach is to generate their minimal polynomials by repeated
applications of the duplication formula for $\dn(z)$. It is then
possible to verify the formulas
\begin{align*}
\dn\left(\frac{2K}{5}\right)=&\frac{1}{4}\left(1+\sqrt{5}+2\sqrt{2+\sqrt{5}}-\sqrt{2(5+\sqrt{5})}\right),\\
\dn\left(\frac{6K}{5}\right)=&\frac{1}{4}\left(1+\sqrt{5}-2\sqrt{2+\sqrt{5}}+\sqrt{2(5+\sqrt{5})}\right).
\end{align*}

In this example we have assumed that  $\alpha=1/2$. It is still
possible, albeit significantly more difficult, to evaluate these
elliptic functions for certain other values of $\alpha$.

\section{Conclusion}

We have shown how to prove many explicit formulas for $F_{(a,b)}(x)$.
 The most obvious extension of this research is to examine cases where $a>5$ is an odd integer.
   Notice that Theorem \ref{Jacobi Theorem} does not apply to those values.  It should also be
   interesting to attempt to apply our techniques to the class of sums studied by Zucker and
    McPhedran in \cite{ZM2}.  They gave closed form evaluations for many values of
\begin{equation*}
S(p,r,j)=\sum_{m=-\infty}^{\infty}\sum_{n=-\infty}^{\infty}
\frac{j^{2s}}{\left((j n+p)^2+(j m+r)^2\right)^s},
\end{equation*}
in terms of Dirichlet $L$-series.

\textit{Acknowledgements.} This paper arose from a problem
submitted to the \emph{American
  Mathematical Monthly} \cite{zm} and an associated unsolved problem also submitted to the
  \emph{Monthly}. We
thank Problems Editors Douglas Hensley, Kenneth Stolarsky, and
Douglas West for bringing these problems to our attention.  A
portion of this research was carried out while the third author
was visiting the University of Georgia. He is grateful for their
hospitality.  Finally we thank the referee for pointing out equation
\eqref{Fabx rapidly converging formula}, and for providing useful references.

\end{document}